\documentclass[twoside,12pt]{article}
\textwidth 16,5cm \textheight 23cm \topmargin -1cm \oddsidemargin 0.5cm \evensidemargin -0.5cm

\usepackage{amsmath,amssymb,amsbsy,amsfonts,amsthm,latexsym,amsopn,amstext,amsxtra,euscript,amscd}

\usepackage{tikz}
\usetikzlibrary{shapes,fit,arrows,positioning}
\tikzstyle{vertex} = [circle,minimum size=17pt, draw, thick, inner sep=0pt, text centered]
\tikzstyle{edge} = [draw, thick,->]

\usepackage[pdftex]{hyperref}
\usepackage{graphics,graphicx}

\def\ord{{\rm ord}}

\newtheorem{theorem}{Theorem}
\newtheorem{lemma}{Lemma}
\newtheorem{obs}{Observation}
\newtheorem{corollary}{Corollary}

%%%%%%%%%%%%%%%%%%%%%%%%%%%%%%%%%%%%%%%%%

\newcommand*{\TIT}{On sunlet graphs connected to a specific map on $\{1,2,\dots,p-1\}$}
\pagestyle{myheadings}
\markboth{{\small\rm \hfill O. Khadir, L. N\'emeth, L. Szalay 		\hfill}\hspace{-\textwidth}	\underline{${{}_{}}_{}$\hspace{\textwidth}}}  {\underline{${{}_{}}_{}$\hspace{\textwidth}}\hspace{-\textwidth} 	{\small\rm \hfill \TIT 		\hfill}}
\title{\bf \TIT}

\newcommand*{\ADRo}{Laboratory of Mathematics, Cryptography and Mechanics, University of Hassan II Mohammedia-Casablanca, Morocco. \textit{khadir@hotmail.com}}

\newcommand*{\ADRn}{University of Sopron, Institute of Mathematics, Hungary. \textit{nemeth.laszlo@uni-sopron.hu}}

\newcommand*{\ADRs}{J.~Selye University, Department of Mathematics and Informatics, Slovakia; and University of Sopron,  Institute of Mathematics, Hungary.  \textit{szalay.laszlo@uni-sopron.hu}}

\author{\sc Omar Khadir\footnote{\ADRo}, L\'aszl\'o N\'emeth\footnote{\ADRn}, L\'aszl\'o Szalay\footnote{\ADRs}}
\date{}

%%%%%%%%%%%%%%%%%%%%%%%%%%%%%%%%%%%%%%%%%

\begin{document}

\maketitle \thispagestyle{empty}

\begin{abstract}
In this article, we study the structure of the graph implied by a given map on the set $S_p=\{1,2,\dots,p-1\}$, where $p$ is an odd prime. The consecutive applications of the map generate an integer sequence, or in graph theoretical context a walk, that is linked to the discrete logarithm problem.   \\[1mm]
 {\em Key Words: directed sunlet graph, recurrence sequence, discrete logarithm problem.}\\
{\em MSC code:  11T71, 05C20, 11B37.} \\[1mm] 
The final publication is available at Annales Mathematicae et Informaticae 48 (2018) via http://ami.ektf.hu/.
\end{abstract}

% 11T71  	Number theory - Algebraic coding theory; cryptography
% 05C20  	Combinatorics -Directed graphs (digraphs), tournaments
% 11B37  	Number theory - Recurrences 

\section{Introduction}

Public key cryptography began in $1976$ with a publication of Diffie and Hellman \cite{DH}, their fundamental work is {\it New direction in cryptography}. 
In the most cases, the security of a protocol is based on known hard questions in mathematics, and particularly in number theory. One of them is the discrete logarithm problem (in short, DLP). Let $p$ denote a large prime integer, say having more than a hundred of digits. If $a$ is a primitive root modulo $p$, and $b$ is a fixed integer not divisible by $p$, then it is difficult to compute the unknown $x$ such that 
\begin{equation}\label{christ}
a^x\equiv b\ \pmod{p}.
\end{equation} 
For example, the Diffie and Hellman method \cite{DH} and ElGamal signature \cite{ElGamal} are based on the supposition that this modular equation is intractable. It is easy to see that if 2 is also a primitive root modulo $p$, and $2^y\equiv a\ \pmod{p}$ can be efficiently solved, then (\ref{christ}) can be also efficiently solved. Hence it is sufficient to investigate the DLP with base 2. The present paper is also associated with this specification.

The first significant algorithm for solving the discrete logarithm problem was proposed by Shanks \cite{Shanks} in $1971$. 
Pohlig and Hellman \cite{PohligHellman} 
published an improved algorithm in $1978$. In the same year, other methods were suggested by Pollard \cite{Pollard}. But until now, no polynomial time algorithm is known. This fact justifies the efforts made by researchers to obtain advances in this mathematical field. 

In $2013$ two of the authors \cite{KhadirSzalay} studied a special recurrent integer sequence $(u_n)_{n\in\mathbb N}$ which can be used in solving the discrete logarithm problem when some favorable conditions are satisfied. More precisely, let $p$ and $q$ be odd primes such that $p=2q+1$, and 2 is a primitive root modulo $p$. Further let $u_0=b$, $1\le b\le q$, and 
\begin{equation}\label{seq1}
u_{n+1} = \begin{cases} u_n/2, &\mbox{if $u_n$ is even,}  \\
(p-u_n)/2, &\mbox{if $u_n$ is odd.} \end{cases}
\end{equation}
They proved that if $n_0$ is the smallest positive integer such that $u_{n_0}=1$, then $x_{n_0}$ is a solution of the discrete logarithm problem $2^x\equiv b \;(\bmod\, p)$. Here $x_0=0$, further
\begin{equation}\label{seq2}
x_{n+1} = \begin{cases} x_n+1 \;(\bmod\, p), &\mbox{if $u_n$ is even,}  \\
x_n+1+q \;(\bmod\, p), &\mbox{if $u_n$ is odd.} \end{cases}
\end{equation}
Consequently, the designers of cryptosystems must avoid the situation of small $n_0$.

The connection between DLP and the sequences (\ref{seq1}), (\ref{seq2}) motivated us to investigate the graph generated by (\ref{seq1}) if one considers it as a map on the set $S_p=\{1,2,\dots,p-1\}$. In this work, we principally concentrated on the structure of the aforementioned graph. Here we assume only the primality of $p$, and we do not suppose the primality of $q$ in $p=2q+1$. It turned out that our graphs are so-called sunlet graphs (see, for example \cite{Fu}), and we discovered and described many properties of them.

Our paper is organized as  follows. In Section 2 we define the map which induces the graph denoted by ${\cal G}_p$. Then we investigate the properties of the graph. Section 3 is devoted to provide some examples and remarks.

\section{The map and its properties}

Fix an odd prime $p$, and then the set $S_p=\{1,2,\dots,p-1\}$. Consider the map
\begin{equation}\label{elag1}
u(n+1) = \begin{cases} u(n)/2, &\mbox{if $u(n)$ is even,}  \\
(p-u(n))/2, &\mbox{if $u(n)$ is odd} \end{cases}
\end{equation}
on $S_p$. The map $u$ induces a digraph ${\cal G}_p$, such that there is an edge from $x$ to $y$ exactly when $u(x)=y$. In this paper, we describe the structure and some properties of the graph induced by (\ref{elag1}). As an illustration, the graph belonging to $p=17$ is drawn in Fig.~\ref{fig:graph_p17}.

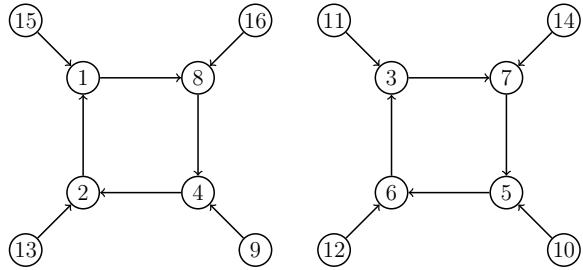
\begin{figure}[h!] \centering \scalebox{0.72}{
\begin{tikzpicture}[scale=0.75]
\node[vertex] (1) at (180-45:2) {1};
\node[vertex] (2) at (180-1*90-45:2) {8};
\node[vertex] (3) at (180-2*90-45:2) {4};
\node[vertex] (4) at (180-3*90-45:2) {2};

\foreach \from/\to in {1/2,2/3,3/4,4/1}
\path[edge] (\from) edge (\to);

\node[vertex] (a1) at (180-0*90-45:4) {15};
\node[vertex] (a2) at (180-1*90-45:4) {16};
\node[vertex] (a3) at (180-2*90-45:4) {9};
\node[vertex] (a4) at (180-3*90-45:4) {13};

\draw[edge] (a1) to (1);
\draw[edge] (a2) to (2);
\draw[edge] (a3) to (3);
\draw[edge] (a4) to (4);
\end{tikzpicture}\qquad
\begin{tikzpicture}[scale=0.75]
\node[vertex] (1) at (180-45:2) {3};
\node[vertex] (2) at (180-1*90-45:2) {7};
\node[vertex] (3) at (180-2*90-45:2) {5};
\node[vertex] (4) at (180-3*90-45:2) {6};

\foreach \from/\to in {1/2,2/3,3/4,4/1}
\path[edge] (\from) edge (\to);

\node[vertex] (a1) at (180-0*90-45:4) {11};
\node[vertex] (a2) at (180-1*90-45:4) {14};
\node[vertex] (a3) at (180-2*90-45:4) {10};
\node[vertex] (a4) at (180-3*90-45:4) {12};

\draw[edge] (a1) to (1);
\draw[edge] (a2) to (2);
\draw[edge] (a3) to (3);
\draw[edge] (a4) to (4);
\end{tikzpicture}\quad
}
\caption{Sunlet subgraphs in case $p=17$}
    \label{fig:graph_p17}
\end{figure}
Define
$$c_p=\frac{p-1}{2\,\ord_p(4)},$$
that is clearly integer. We prove the following theorem.
\begin{theorem}\label{main}
The graph ${\cal G}_p$ splits into $c_p$ connected isomorphic subgraphs. Each subgraph contains a cycle with length $L_p=\ord_p(4)$, and each vertex of the cycle possesses two incoming edges.
\end{theorem}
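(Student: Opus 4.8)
The plan is to analyze the map $u$ modulo $p$. First I would observe that for every $n\in S_p$ one has $2u(n)\equiv\pm n\pmod p$ — namely $2u(n)=n$ when $n$ is even and $2u(n)=p-n\equiv -n$ when $n$ is odd — so that $u(n)\equiv \pm 2^{-1}n\pmod p$, and moreover $u(n)=u(p-n)$ for every $n$ (a direct case check on parity). Thus $u$ collapses the pair $\{n,p-n\}$; since exactly one of $n,p-n$ lies in $I:=\{1,\dots,(p-1)/2\}$ and the other in $L:=\{(p+1)/2,\dots,p-1\}$, the image of $u$ is exactly $I$, and every $m\in I$ has precisely two preimages, one in $I$ and one in $L$ (the two preimages being $2m$ and $p-2m$, which sum to $p$). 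Consequently every vertex of $I$ has in-degree $2$ while every vertex of $L$ has in-degree $0$, and $u$ restricts to a permutation of the $(p-1)/2$-element set $I$.

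Next I would describe the components. Since $u|_I$ is a permutation, its functional digraph is a disjoint union of directed cycles lying entirely in $I$; attaching to each cycle vertex $m$ its unique preimage in $L$ (a pendant vertex with no further preimage), each connected component of ${\cal G}_p$ is precisely a directed sunlet graph built on one cycle of $u|_I$. The assertions about in-degrees and the sunlet shape are then immediate, and it remains to compute the common cycle length and the number of components.

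For the cycle length I would pass to the quotient group $G=(\mathbb Z/p\mathbb Z)^\times/\{\pm1\}$, cyclic of order $(p-1)/2$, via the bijection $I\to G$, $n\mapsto\{n,-n\}$ (well defined and bijective by the remark above). Under this bijection the relation $u(n)\equiv\pm2^{-1}n$ shows that $u|_I$ is conjugate to multiplication by $2^{-1}$ on $G$. Multiplication by a fixed element $g$ of an abelian group has every orbit of length exactly $\ord(g)$, so every cycle of $u|_I$ has length $\ord_G(2^{-1})=\ord_G(2)$, i.e.\ the least $k$ with $2^k\equiv\pm1\pmod p$. A short case check on the parity of $\ord_p(2)$ (using that $2^{\ord_p(2)/2}\equiv-1$ when $\ord_p(2)$ is even) identifies this least $k$ with $\ord_p(4)=L_p$. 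Hence all cycles, and therefore all components, are isomorphic sunlet graphs with cycle length $L_p$.

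Finally I would count: $I$ has $(p-1)/2$ elements split into cycles of length $L_p$, giving $\frac{(p-1)/2}{L_p}=\frac{p-1}{2\,\ord_p(4)}=c_p$ cycles and therefore $c_p$ components. I expect the genuinely delicate point to be the cycle-length step — keeping the $\pm$ signs under control across iterated applications of $u$ — which is exactly what the passage to $G=(\mathbb Z/p\mathbb Z)^\times/\{\pm1\}$ is designed to make painless; everything else is bookkeeping about preimages.
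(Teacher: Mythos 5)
Your argument is correct, but it follows a genuinely different route from the paper. The paper first pins down the preimage structure (its Observations 1--2, which match your bookkeeping that each $a\le q$ has exactly the two preimages $2a$ and $p-2a$), but then handles the two delicate points by congruence manipulations: a technical lemma ($ay\equiv(-1)^{y-x}bx\pmod p$ whenever $u(x)=a$, $u(y)=b$) whose telescoped product along two cycles shows all cycles have equal length, and a separate two-way divisibility argument giving $L_p=\ord_p(4)$ from iterating $u(x)\equiv\pm x/2\pmod p$ around a cycle. You instead observe that $u$ restricted to $S_p^\ell$ is a permutation and, after passing to the quotient group $(\mathbb{Z}/p\mathbb{Z})^\times/\{\pm1\}$ of order $q$, is conjugate to multiplication by $2^{-1}$; this single structural step simultaneously yields that all cycles have the same length, that this length is the order of $2$ in the quotient, and the component count $c_p=q/L_p$. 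What your approach buys is uniform control of the $\pm$ signs and the elimination of the paper's two separate arguments; what the paper's approach buys is complete elementarity (nothing beyond congruences, no quotient groups). One small simplification available to you: the parity case analysis at the end is unnecessary, since $2^k\equiv\pm1\pmod p$ holds if and only if $4^k\equiv1\pmod p$, so the least such $k$ is $\ord_p(4)$ by definition.
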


\noindent First we justify a lemma which has an important
corollary.

\begin{lemma} \label{l1}
Suppose that $u(x)=a$ and $u(y)=b$ hold for some $x,y,a,b\in S_p$. Then
\begin{equation}\label{eq1}
ay\equiv(-1)^{y-x}bx\pmod{p}.
\end{equation}
\end{lemma}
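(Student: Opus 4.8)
The plan is to first collapse the two branches of the piecewise map \eqref{elag1} into a single modular identity, and then push it through the two hypotheses by a short cross-multiplication. The key observation is that for every $n\in S_p$ one has
$2u(n)\equiv(-1)^n n\pmod p$: indeed, if $n$ is even then $2u(n)=n$, while if $n$ is odd then $2u(n)=p-n\equiv-n\pmod p$, and in both cases the factor $(-1)^n$ records precisely the sign that appears. Since $p$ is odd, $2$ is invertible modulo $p$, so this identity lets us move freely between $u(n)$ and $n$.

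Next I would apply this identity to the two given relations. From $u(x)=a$ we obtain $2a\equiv(-1)^x x\pmod p$, and from $u(y)=b$ we obtain $2b\equiv(-1)^y y\pmod p$. Multiplying the first congruence by $y$ and the second by $x$ gives $2ay\equiv(-1)^x xy\pmod p$ and $2bx\equiv(-1)^y xy\pmod p$, both of which now contain the same quantity $xy$ on the right-hand side.

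Finally I would eliminate $xy$ and tidy up the signs. Multiplying the first of these by $(-1)^y$ and the second by $(-1)^x$ yields $(-1)^y\,2ay\equiv(-1)^{x+y}xy\equiv(-1)^x\,2bx\pmod p$. Cancelling the unit $2$ and then multiplying through by $(-1)^y$ (using $(-1)^{2y}=1$) produces $ay\equiv(-1)^{x+y}bx\pmod p$; since $x+y$ and $y-x$ differ by the even number $2x$, we have $(-1)^{x+y}=(-1)^{y-x}$, which is exactly \eqref{eq1}.

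I do not expect any real obstacle here: the single idea that does the work is the unifying congruence $2u(n)\equiv(-1)^n n\pmod p$, which merges the two cases of the definition of $u$ into one; after that the argument is a two-line manipulation relying only on $2$ being a unit modulo the odd prime $p$ and on the parity identity $x+y\equiv y-x\pmod 2$.
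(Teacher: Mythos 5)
Your proof is correct, and it takes a cleaner route than the paper's own argument. The paper proves Lemma \ref{l1} by a direct case analysis on the parities of $x$ and $y$: when the parities agree it verifies $ay\equiv bx\pmod p$ in the two subcases ($a=x/2,\ b=y/2$ versus $a=(p-x)/2,\ b=(p-y)/2$), and when they differ it verifies $ay\equiv -bx\pmod p$ in the remaining two subcases. You instead collapse the piecewise definition into the single congruence $2u(n)\equiv(-1)^n n\pmod p$ and then obtain \eqref{eq1} by cross-multiplying the two instances $2a\equiv(-1)^x x$ and $2b\equiv(-1)^y y$, cancelling the unit $2$, and using $(-1)^{x+y}=(-1)^{y-x}$. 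The gain is that the parity bookkeeping is done once, in the unified identity, rather than four times; the paper's version is more elementary in that it never needs to invoke invertibility of $2$ explicitly, but your formulation also dovetails with the congruence $u(x)\equiv\pm x/2\pmod p$ that the paper itself uses later in the proof of Observation \ref{o3}, so in a sense you have simply moved that observation forward and derived the lemma from it. All steps check out; there is no gap.
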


\begin{proof}
If $x$ and $y$ have the same parity, then either $a=x/2$ and $b=y/2$, or $a=(p-x)/2$ and $b=(p-y)/2$. Hence either
$ay=a\cdot2b=2a\cdot b=bx$, or $ay=a(p-2b)\equiv b(p-2a)=bx\pmod{p}$, respectively.

Assume now that $x\not\equiv y\pmod{2}$. It leads either $a=x/2$ and $b=(p-y)/2$, or $a=(p-x)/2$ and $b=y/2$.
In the first case we see $ay=a(p-2b)=ap-bx\equiv -bx\pmod{p}$, while in the second case we have
$ay=a\cdot2b\equiv-b(p-2a)=-bx\pmod{p}$.

Then the statement is clearly comes from the previous arguments.
\end{proof}

Now we give a direct consequence of Lemma \ref{l1}.

\begin{corollary}\label{c1}
Under the same conditions
\begin{equation}\label{eq2}
a\equiv(-1)^{y-x}bxy^{-1}\pmod{p}
\end{equation}
holds.
\end{corollary}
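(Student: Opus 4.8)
The plan is to obtain (\ref{eq2}) directly from the congruence (\ref{eq1}) of Lemma~\ref{l1}, so the whole argument amounts to one legitimate cancellation step. First I would record that the hypothesis $y\in S_p=\{1,2,\dots,p-1\}$ forces $\gcd(y,p)=1$, so that the multiplicative inverse $y^{-1}$ modulo $p$ exists; this is the only fact about the ambient setting that the corollary needs beyond the lemma itself. Then, starting from
$$ay\equiv(-1)^{y-x}bx\pmod{p},$$
I would multiply both sides by $y^{-1}$, which yields $a\equiv(-1)^{y-x}bxy^{-1}\pmod{p}$, exactly the asserted congruence (\ref{eq2}).

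I do not expect any real obstacle here, since the statement is little more than a rearrangement of Lemma~\ref{l1}. The only points deserving a moment's attention are bookkeeping ones: that $x,y$ are taken to be honest integers of $\{1,\dots,p-1\}$, so the parity of $y-x$, and hence the sign $(-1)^{y-x}\in\{1,-1\}$, is unambiguous; and that $a\in S_p$ as well, so both sides of (\ref{eq2}) genuinely represent elements of $(\mathbb{Z}/p\mathbb{Z})^\times$. With these trivialities noted, the single multiplication by $y^{-1}$ finishes the proof, and the corollary is then available in the form ``$a=u(x)$ expressed through $b=u(y)$ and the ratio $xy^{-1}$'', which is the shape most convenient for tracking the iterates of $u$ along the cycles of ${\cal G}_p$ in the proof of Theorem~\ref{main}.
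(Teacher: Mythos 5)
Your proposal is correct and matches the paper's treatment: the paper presents the corollary as a direct consequence of Lemma~\ref{l1} with no further argument, and your cancellation by $y^{-1}$ (justified since $y\in S_p$ is coprime to $p$) is exactly that step, made explicit.
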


\noindent Now we give the proof of Theorem \ref{main}, which is split into a few parts called observations. Put $q=(p-1)/2$. Note that the map $u$ does not possess fixed points.

\begin{obs}\label{o0}
If $u(x)=u(y)$ holds for some $x\neq y$, then $x+y=p$.
\end{obs}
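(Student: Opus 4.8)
The plan is to use Corollary \ref{c1} with the hypothesis $a=b$. Suppose $u(x)=u(y)=a$ for some $x\neq y$ in $S_p$. Applying \eqref{eq2} directly gives $a\equiv(-1)^{y-x}axy^{-1}\pmod p$, and since $a\in S_p$ is invertible modulo $p$, we may cancel $a$ to obtain $1\equiv(-1)^{y-x}xy^{-1}\pmod p$, i.e.
$$y\equiv(-1)^{y-x}x\pmod p.$$
Now I would split into two cases according to the parity of $y-x$. If $y\equiv x\pmod 2$, then $y\equiv x\pmod p$; since $1\le x,y\le p-1$, this forces $y=x$, contradicting $x\neq y$. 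Hence $y\not\equiv x\pmod 2$, and then $y\equiv -x\pmod p$, which for elements of $S_p$ means exactly $x+y=p$.

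The only point that needs a word of care is the parity bookkeeping inside Corollary \ref{c1}: the exponent $y-x$ is only used modulo $2$, so the argument is consistent whether we read $y-x$ or $x-y$. The case $x\equiv y\pmod 2$ with $x\neq y$ is genuinely impossible here, which is exactly what the computation shows; there is no hidden case where two same-parity elements collapse, because the two branches of the map $u$ are each injective on their respective parity class (halving, resp. $t\mapsto(p-t)/2$, are both injective). So the real content is that a collision can only occur across the two parity classes, and then it must be the antipodal pair $\{x,p-x\}$.

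I do not expect any serious obstacle here; the statement is essentially an immediate corollary of Lemma \ref{l1}. The main thing to get right is to invoke the already-established Corollary \ref{c1} cleanly rather than redoing the four-case parity analysis from scratch, and to note explicitly that $\gcd(a,p)=1$ so that cancellation of $a$ is legitimate.
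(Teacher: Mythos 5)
Your proof is correct, and there is no circularity issue: Corollary \ref{c1} is established before Observation \ref{o0} in the paper, so invoking it with $a=b$ is legitimate, and your cancellation of $a$ (valid since $1\le a\le p-1$ gives $\gcd(a,p)=1$) together with the bounds $1\le x,y\le p-1$ and $2\le x+y\le 2p-2$ correctly yields $y=x$ in the same-parity case (contradiction) and $x+y=p$ in the mixed-parity case. The paper, however, takes a more elementary route that bypasses the congruence machinery entirely: it observes directly from the definition of $u$ that two elements of the same parity with $u(x)=u(y)$ must be equal (each branch, $t\mapsto t/2$ and $t\mapsto (p-t)/2$, is injective on its parity class --- exactly the remark you make in passing), so $x\neq y$ forces opposite parities, and then the equality $\frac{x}{2}=\frac{p-y}{2}$ or $\frac{p-x}{2}=\frac{y}{2}$ gives $x+y=p$ in two lines. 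Your version buys economy of case analysis by reusing Corollary \ref{c1}, at the cost of routing a purely combinatorial fact through modular inverses; the paper's version is shorter, self-contained, and uses only exact integer identities rather than congruences. Both are valid proofs of the observation.
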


\begin{proof}
Since $x\ne y$, we see that the parity of $x$ differs the parity of $y$. Thus either
$$
\frac{x}{2}=\frac{p-y}{2}\qquad {\rm or}\qquad \frac{p-x}{2}=\frac{y}{2}
$$
follows, both options admit $x+y=p$.
\end{proof}

\begin{obs}\label{o1}
The equation $u(x)=a$ is soluble if and only if $a\le q$, and in this case there exist exactly two solutions.
\end{obs}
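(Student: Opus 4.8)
The plan is to analyse the equation $u(x)=a$ by splitting on the parity of the sought preimage $x$, since the two branches in the definition of $u$ are governed precisely by this parity. Inverting each branch separately: if $x$ is even, then $u(x)=a$ forces $x=2a$; if $x$ is odd, then $u(x)=a$ forces $x=p-2a$. So there are at most two candidates for a preimage of $a$, namely $2a$ and $p-2a$, and I then only need to decide when each of these genuinely lies in $S_p$ and carries the parity required by the branch it came from.

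For the parity, note that $2a$ is always even, while $p-2a$ is always odd because $p$ is odd; hence whenever one of these two numbers belongs to $S_p=\{1,\dots,p-1\}$, it is automatically a legitimate solution of the appropriate branch. It remains to examine membership in $S_p$. Since $a\ge 1$ we have $2a\ge 2$ and $p-2a\le p-2$, so the only inequalities that can fail are $2a\le p-1$ and $p-2a\ge 1$; but both of these are equivalent to $2a\le p-1$, i.e. to $a\le q$. Consequently, if $a\le q$ then both $2a\in S_p$ and $p-2a\in S_p$, so the equation has exactly two solutions; they are distinct because they have different parities, and indeed $2a+(p-2a)=p$, in accordance with Observation~\ref{o0}. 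If $a>q$, then $2a\ge p+1>p-1$ and $p-2a\le -1<1$, so neither candidate lies in $S_p$, and the equation is insoluble.

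There is essentially no hard step here: the argument amounts to inverting a two-branch piecewise map and checking a pair of range inequalities. The only point deserving a little care is confirming that the parity of each candidate matches the branch that produced it, which is immediate from the oddness of $p$.
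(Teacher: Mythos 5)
Your proof is correct and follows essentially the same elementary route as the paper: both identify $2a$ and $p-2a$ as the only possible preimages and reduce solubility to the inequality $a\le q$. The only minor organizational difference is that the paper bounds $u(x)\le q$ for necessity and invokes Observation~\ref{o0} to exclude a third solution, whereas your branch-by-branch inversion (with the parity check) yields at most one even and at most one odd preimage directly, making that observation unnecessary.
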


\begin{proof}
Assume that $x$ and $a$ satisfy $u(x)=a$.
If $x\in S_p$ is even, then $u(x)=x/2\le(p-1)/2=q$. Contrary, if $x$ is odd, then $u(x)=(p-x)/2\le(p-1)/2=q$. On the other hand, $u(2a)=a$ and $u(p-2a)=a$ hold. By Observation \ref{o0} no third solution to the equation.
\end{proof}

Note that exactly one of $2a$ and $p-2a$ is larger than $q$. Let $S_p^\ell=\{1,2,\dots,q\}$ and $S_p^u=\{q+1,q+2,\dots,p-1\}$. Clearly $S_p^\ell\cup S_p^u=S_p$, and $|S_p^\ell|=|S_p^\ell|$.
Hence, using graph theoretical terminology, we obtain the following information about the structure of ${\cal G}_p$:
{\it the elements of $S_p^\ell$ form cycle(s), further each element of $S_p^u$ goes to an appropriate element of $S_p^\ell$ such that different elements of $S_p^u$ go different elements of $S_p^\ell$. In other words, ${\cal G}_p$ consists of sunlet graph(s) (or sun graph(s)).}

In the next step we show that the sunlet graphs included in ${\cal G}_p$ are isomorphic.

\begin{obs}\label{o2}
If ${\cal G}_p$ consists of at least two connected sunlet graphs, then all the sunlet graphs are isomorphic.
\end{obs}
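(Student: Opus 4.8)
The plan is to reduce the claim to a single statement about cycle lengths. A directed sunlet graph --- an oriented cycle of some length $\ell$ together with $\ell$ extra vertices, one feeding a single edge into each cycle vertex --- is determined up to isomorphism by $\ell$ alone: matching the two cycles vertex by vertex and then matching the unique extra in-neighbour of each cycle vertex is a graph isomorphism. So it suffices to show that all cycles occurring in $\mathcal G_p$ have the same length. First I would record that $u$ restricted to $S_p^\ell$ is a permutation of $S_p^\ell$: it maps $S_p^\ell$ into itself since by Observation \ref{o1} the image of $u$ is $S_p^\ell$, and it is injective since by Observation \ref{o0} two distinct elements with a common image sum to $p$ and hence cannot both lie in $\{1,\dots,q\}$ --- an injective self-map of a finite set is a permutation. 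Thus the cycle of each sunlet subgraph of $\mathcal G_p$ is exactly one cycle of this permutation, and the claim becomes the assertion that these cycles are equally long.

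The key elementary fact, read off directly from (\ref{elag1}) (and already implicit in the proof of Lemma \ref{l1}), is that $2u(x)\in\{x,\,p-x\}$ for every $x\in S_p$, i.e. $2u(x)\equiv\pm x\pmod p$. Iterating along a walk $x_0\to x_1\to\cdots$ gives $2^{i}x_i\equiv\pm x_0\pmod p$, the sign absorbing those collected on the way. If $x_0\in S_p^\ell$ lies on a cycle of length $L$, then $x_L=x_0$, so $2^{L}x_0\equiv\pm x_0\pmod p$; cancelling the unit $x_0$ gives $2^{L}\equiv\pm1$, hence $4^{L}\equiv1\pmod p$ and $\ord_p(4)\mid L$. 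Conversely, with $d=\ord_p(4)$ we have $4^{d}\equiv1$, so $2^{d}\equiv\pm1\pmod p$; the iterated relation at $i=d$ then gives $x_d\equiv\pm x_0\pmod p$. Since $x_0$ and $x_d$ both lie in $S_p^\ell=\{1,\dots,q\}$, the minus sign is impossible ($x_d\equiv-x_0$ would force $x_d=p-x_0\ge q+1$), so $x_d=x_0$ and $L\mid d$. Hence every cycle has length exactly $L_p=\ord_p(4)$, all the sunlet subgraphs share this cycle length, and therefore they are isomorphic.

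I do not foresee a genuine obstacle; the only slightly delicate points are the sign bookkeeping in $2^{i}x_i\equiv\pm x_0$ and the observation that $-x_0$ has no representative inside $S_p^\ell$, which is precisely what pins $x_d$ to $x_0$. (An alternative, more structural route would be to exhibit the isomorphism explicitly: if $a$ and $a'$ are cycle points of two of the subgraphs, the map sending each vertex $v$ to the element of $S_p^\ell$, respectively $S_p^u$, congruent to $\pm a'a^{-1}v\pmod p$ intertwines $u$ with itself and carries one subgraph onto the other, which simultaneously reproves that the two cycles are in bijection.)
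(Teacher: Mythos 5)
Your proposal is correct, but it takes a genuinely different route from the paper. The paper's proof of this observation compares two arbitrary cycles directly: it chains Corollary~\ref{c1} along the shorter cycle, multiplies the resulting congruences so that the $y_i$'s telescope away, and obtains $x_1\equiv\pm x_{k+1}\pmod p$, whence $x_{k+1}=x_1$ because both lie in $\{1,\dots,q\}$; the common value $\ord_p(4)$ of the cycle length is only identified afterwards, in Observation~\ref{o3}. You instead bypass the pairwise comparison entirely and pin every cycle length to $\ord_p(4)$ at once, using the single relation $2u(x)\equiv\pm x\pmod p$ from (\ref{elag1}), the iterate $2^i x_i\equiv\pm x_0$, and the same positivity argument (no element of $S_p^\ell$ is congruent to the negative of another) to exclude the minus sign. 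Your preliminary remark that $u$ restricted to $S_p^\ell$ is a permutation (via Observations~\ref{o0} and~\ref{o1}) and that a directed sunlet graph is determined up to isomorphism by its cycle length matches the paper's implicit reduction. What your route buys is economy: it proves Observations~\ref{o2} and~\ref{o3} simultaneously, and your converse direction ($x_d=x_0$, hence $L\mid\ord_p(4)$) is cleaner than the paper's treatment in Observation~\ref{o3}, which argues by division with remainder and contains a slip ($qL_p=\ord_p(4)$ where $sL_p$ is meant, and the case $r\neq 0$ is not really closed the way it is phrased). What the paper's route buys is that the pairwise argument uses only Lemma~\ref{l1} and makes no appeal to the multiplicative order until it is needed, which is why the statement $L_p\mid p-1$ can be recorded as an immediate corollary before $L_p$ is computed.
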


\begin{proof}
Obviously it is sufficient to prove that two cycles have the same length.
Take two cycles, saying $x_1,x_2,\dots,x_n$ and $y_1,y_2,\dots,y_k$, where $n\ge2$ and $k\ge2$. Without loss of generality we may assume that $k\le n$. By Corollary \ref{c1} the following congruences hold modulo $p$.
\begin{eqnarray*}
y_2&\equiv&(-1)^{x_1-y_1}y_1x_2x_1^{-1}, \\
y_3&\equiv&(-1)^{x_2-y_2}y_2x_3x_2^{-1}, \\
&\vdots& \\
y_k&\equiv&(-1)^{x_{k-1}-y_{k-1}}y_{k-1}x_kx_{k-1}^{-1}, \\
y_{k+1}=y_1&\equiv&(-1)^{x_{k}-y_{k}}y_{k}x_{k+1}x_{k}^{-1}. 
\end{eqnarray*}
The product of all the congruences above returns with
$$
1\equiv (-1)^{x_\sigma-y_\sigma}x_{k+1}x_1^{-1}\pmod{p},
$$
where $x_\sigma=\sum_{i=1}^k x_i$ and $y_\sigma=\sum_{i=1}^ky_i$. Thus
$$
x_1\equiv (-1)^{x_\sigma-y_\sigma}x_{k+1}\pmod{p}.
$$
In accordance with the parity of exponent ${x_\sigma-y_\sigma}$, we have either
$x_{k+1}=x_1$ or $x_{k+1}=p-x_1$. But the second case cannot be occurred because it leads to a contradiction by $q\ge x_{k+1}=p-x_1>q$. Subsequently, $x_{k+1}=x_1$, and then $n=k$.
\end{proof}

A direct consequence is the following statement.
\begin{corollary}
$L_p\mid p-1$.
\end{corollary}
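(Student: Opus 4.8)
The proof I have in mind is a vertex count built directly on Observations~\ref{o0}--\ref{o2}. Recall what has already been secured: by Observations~\ref{o0} and~\ref{o1}, together with the remark following the latter, the graph ${\cal G}_p$ is a disjoint union of sunlet graphs, the cycle vertices being exactly the elements of $S_p^\ell$ and the pendant vertices being exactly the elements of $S_p^u$, with the two preimages of a cycle vertex $a\in S_p^\ell$ consisting of its cycle-predecessor together with the unique $a$-preimage lying in $S_p^u$. By Observation~\ref{o2} all these sunlet components are isomorphic; in particular every cycle has the same length, which is the quantity $L_p$.

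First I would fix this common cycle length $L_p$ and let $m$ denote the number of sunlet components of ${\cal G}_p$. Since distinct components have disjoint cycles and the cycles together exhaust $S_p^\ell$, counting the cycle vertices yields $m\,L_p=|S_p^\ell|=q=(p-1)/2$. Equivalently, counting all vertices, each component being a sunlet on $2L_p$ vertices, one gets $2m\,L_p=|S_p|=p-1$. Either relation shows $L_p\mid (p-1)/2$, hence $L_p\mid p-1$, which is the assertion; it also expresses the number of components as $m=(p-1)/(2L_p)$, the quantity $c_p$ appearing in Theorem~\ref{main} once $L_p$ is shown to equal $\ord_p(4)$.

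There is essentially no hard step here: the structural work — that ${\cal G}_p$ decomposes into sunlets at all, that the cycles lie in $S_p^\ell$, and that all cycles are equally long — is already carried out in Observations~\ref{o0}--\ref{o2}, so the corollary reduces to the bookkeeping above. The only point requiring care is to invoke Observation~\ref{o2} for the equality of all cycle lengths, so that ``$L_p$'' is a well-defined single integer, rather than silently assuming it. I would also note that, once the remaining part of Theorem~\ref{main} identifies $L_p$ with $\ord_p(4)$, the same divisibility becomes simply Lagrange's theorem for the element $4$ of the group $(\mathbb{Z}/p\mathbb{Z})^{\times}$; the content of the corollary is that it drops out of the combinatorial structure for free.
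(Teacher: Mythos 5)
Your counting argument is exactly the reasoning the paper leaves implicit when it calls the corollary ``a direct consequence'': the cycles of the isomorphic sunlet components partition $S_p^\ell$, so $c_p\,L_p=(p-1)/2$ and hence $L_p\mid p-1$. The proposal is correct and takes essentially the same route, with the appeal to Observation~\ref{o2} for a well-defined common cycle length properly made explicit.
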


\begin{obs}\label{o3}
$L_p=\ord_p(4)$.
\end{obs}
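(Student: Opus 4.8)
The plan is to show that a cycle of the map $u$ corresponds exactly to an orbit of the map $x \mapsto 4x$ on a suitable subset of residues, so that its length equals the multiplicative order of $4$ modulo $p$. First I would pick any cycle $x_1, x_2, \dots, x_n, x_{n+1}=x_1$ of length $L_p = n$. Applying Corollary \ref{c1} with $a = x_{i+1}$, $b = x_i$, and using $u(x_i) = x_{i+1}$, $u(x_{i-1}) = x_i$ (so that the roles of the pair $(x,y)$ in the corollary are played by $(x_{i-1}, x_i)$), one gets a congruence of the shape $x_{i+1} \equiv (-1)^{x_{i-1}-x_i} x_i^2 x_{i-1}^{-1} \pmod p$. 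Telescoping these around the cycle — exactly as in the proof of Observation \ref{o2}, but now comparing the cycle with itself shifted by one step — collapses the product and yields $x_{n+1} \equiv \pm\, 4^{\,n} x_1^{-?} \cdots$; I expect the clean statement to come out most transparently by instead iterating the \emph{two-step} relation: from $u(x_i)=x_{i+1}$ we have (regardless of parities) $x_{i+1} \equiv \pm x_i/2$, hence $x_{i+2} \equiv \pm x_i / 4 \pmod p$ after two applications, with the sign again controlled by parities. Going once around the cycle gives $x_1 \equiv \pm\, 4^{-L_p} x_1 \pmod p$ when $L_p$ is even, so $4^{L_p} \equiv \pm 1$, and the $-1$ alternative is excluded by the same $q < p - x_1 \le q$ contradiction used in Observation \ref{o2}. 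Thus $4^{L_p} \equiv 1 \pmod p$, i.e.\ $\ord_p(4) \mid L_p$.

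For the reverse divisibility, I would argue that $4^m \equiv 1 \pmod p$ with $m = \ord_p(4)$ forces the cycle to close after at most $m$ steps. Start from $x_1 \in S_p^\ell$ and follow the cycle; after every pair of steps the value is multiplied by $4^{-1}$ up to sign, and after $m$ pairs the value returns to $\pm x_1$; but $x_1$ and $x_{2m+1}$ both lie in $S_p^\ell = \{1,\dots,q\}$, so the sign must be $+$ and $x_{2m+1} = x_1$. A small parity bookkeeping — tracking the accumulated sign $(-1)^{x_1 + x_2 + \cdots}$ along the orbit — shows in fact the cycle already closes within the first full period, giving $L_p \mid \ord_p(4)$. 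Combining the two divisibilities gives $L_p = \ord_p(4)$.

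The main obstacle I anticipate is the careful handling of the $\pm$ signs: each application of $u$ multiplies by either $1/2$ or $-1/2$ depending on the parity of the current vertex, and one must verify that the total sign accumulated around a closed cycle is always $+1$ (equivalently, that $\sum_i x_i$ has the right parity), rather than merely appealing to the $S_p^\ell$ membership argument to kill the $-1$ case. The membership argument is clean and suffices for the length computation, so I would lean on it; the genuinely delicate point is making sure the telescoping is set up so that no vertex is visited out of order and the exponent of $4$ that appears is exactly $L_p$ (and not, say, $L_p/2$ or $2L_p$), which is why phrasing everything in terms of the two-step map $x \mapsto x/4$ and distinguishing the parity of $L_p$ is the safest route.
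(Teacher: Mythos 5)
Your overall strategy is the paper's (follow $u(x)\equiv\pm x/2$ around the cycle, then use membership in $S_p^\ell=\{1,\dots,q\}$ to identify cycle vertices), but both directions as written have genuine gaps. For $\ord_p(4)\mid L_p$: going once around the cycle multiplies $x_1$ by $(\pm\tfrac12)^{L_p}$, i.e.\ yields $2^{L_p}\equiv\pm1\pmod p$ (equivalently $4^{L_p/2}\equiv\pm1$ when $L_p$ is even), not $4^{L_p}\equiv\pm1$ as you write; and your plan to kill the minus sign by the $S_p^\ell$ membership argument cannot work at this spot, because the sign sits in a congruence $x_1\equiv\pm c\,x_1$ involving a single vertex, not a relation between two distinct cycle vertices. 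The minus sign genuinely occurs: for $p=17$ the cycle $1\to8\to4\to2$ has $L_p=4$ and $2^{4}\equiv-1$, $4^{2}\equiv-1\pmod{17}$, so with the corrected exponent your sign-exclusion would yield the false statement $\ord_p(4)\mid L_p/2$. The paper sidesteps all of this by squaring: $2^{L_p}\equiv\pm1$ gives $4^{L_p}\equiv1$, hence $\ord_p(4)\mid L_p$, with no sign determination and no case split on the parity of $L_p$ (which your two-step formulation also needs, since $L_p$ can be odd, e.g.\ $L_{31}=5$).

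For the reverse direction your argument only proves $L_p\mid 2\,\ord_p(4)$: after $m=\ord_p(4)$ two-step applications you have taken $2m$ single steps, and the (this time legitimate) membership argument gives $x_{2m+1}=x_1$, i.e.\ $L_p\mid 2m$. Even granting $m\mid L_p$ from the first direction, this leaves $L_p=2m$ open, and the ``small parity bookkeeping'' you invoke to show the cycle already closes within one period is exactly the missing step, not a routine check. The repair is to take only $m$ single steps: since $4^{m}\equiv1$ forces $2^{m}\equiv\pm1$, the cycle vertex reached after $m$ steps is $\equiv\pm x_1\pmod p$, and since it and $x_1$ both lie in $S_p^\ell$ they are equal, whence $L_p\mid m$. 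This is precisely the paper's computation, phrased there by squaring $x_r\equiv(-1)^{\tau}x_1/2^{m}$ to get $x_r^2\equiv x_1^2$ and then excluding $x_r+x_1=p$.
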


\begin{proof}
The formula (\ref{elag1}) of map $u$ implies
\begin{equation}\label{cong}
u(x)\equiv\pm\frac{x}{2}\pmod{p},
\end{equation}
where the minus sign is occurring  exactly if $x$ is odd. Applying (\ref{cong}) consecutively for the cycle $x_1,x_2,\dots,x_{L_p}$
it leads to
$$
x_1\equiv(-1)^t\frac{x_1}{2^{L_p}}\pmod{p},
$$
where $t$ is a suitable non-negative integer, showing the number of odd entries of map $u$. Equivalently we have
$$
2^{L_p}\equiv(-1)^t\pmod{p},
$$
and then
$$
4^{L_p}\equiv 1\pmod{p}.
$$
Thus $\ord_p(4)\mid L_p$. To show the reverse relation $L_p\mid\ord_p(4)$ we assume $\ord_p(4)> L_p$. Let $s\ge1$ and $0\le r< L_p$ two non-negative integers such that $\ord_p(4)=sL_p+r$, where $r\ne0$ holds if $s=1$.
Consider now the sequence
$$
x_1,x_2,\dots, x_{L_p};x_1,x_2,\dots, x_{L_p};\dots;x_1,x_2,\dots, x_{L_p};x_1,x_2,\dots, x_{r},
$$
assuming that here the cycle $x_1,x_2,\dots, x_{L_p}$ occurs $s$ times.
For a suitable $\tau$ we see
$$
x_r\equiv(-1)^\tau\frac{x_1}{2^{\ord_p(4)}}\pmod{p},
$$
and then squaring both sides it follows that
$$
x_r^2\equiv x_1^2\pmod{p}.
$$
It provides either $x_r+x_1=p$ which contradicts the facts that neither $x_1$ nor $x_r$ exceeds $q$, or $x_r=x_1$ which leads to $qL_p=\ord_p(4)$, that is $L_p\mid \ord_p(4)$. Together with $\ord_p(4)\mid L_p$ we conclude $L_p=\ord_p(4)$, and the proof is complete.
\end{proof}

\section{Examples and remarks}

1. Let $p=31$. Now $L_{31}=\ord_{31}(4)=5$ is the length of the cycles. The number of connected subgraphs is $c_{31}=30/(2\cdot 5)=3$. The corresponding graph is drawn here.

\begin{figure}[h!] \centering \scalebox{0.72}{
\begin{tikzpicture}[scale=0.63]
\node[vertex] (1) at (18+3*72-1*360/5:2) {1};
\node[vertex] (2) at (18+3*72-2*360/5:2) {15};
\node[vertex] (3) at (18+3*72-3*360/5:2) {8};
\node[vertex] (4) at (18+3*72-4*360/5:2) {4};
\node[vertex] (5) at (18+3*72-5*360/5:2) {2};

\foreach \from/\to in {1/2,2/3,3/4,4/5, 5/1}
\path[edge] (\from) edge (\to);

\node[vertex] (a1) at (18+3*72-1*360/5:4) {29};
\node[vertex] (a2) at (18+3*72-2*360/5:4) {30};
\node[vertex] (a3) at (18+3*72-3*360/5:4) {16};
\node[vertex] (a4) at (18+3*72-4*360/5:4) {23};
\node[vertex] (a5) at (18+3*72-5*360/5:4) {27};

\draw[edge] (a1) to (1);
\draw[edge] (a2) to (2);
\draw[edge] (a3) to (3);
\draw[edge] (a4) to (4);
\draw[edge] (a5) to (5);
\end{tikzpicture}\quad
\begin{tikzpicture}[scale=0.63]
\node[vertex] (1) at (18+3*72-1*360/5:2) {3};
\node[vertex] (2) at (18+3*72-2*360/5:2) {14};
\node[vertex] (3) at (18+3*72-3*360/5:2) {7};
\node[vertex] (4) at (18+3*72-4*360/5:2) {12};
\node[vertex] (5) at (18+3*72-5*360/5:2) {6};

\foreach \from/\to in {1/2,2/3,3/4,4/5, 5/1}
\path[edge] (\from) edge (\to);

\node[vertex] (a1) at (18+3*72-1*360/5:4) {25};
\node[vertex] (a2) at (18+3*72-2*360/5:4) {28};
\node[vertex] (a3) at (18+3*72-3*360/5:4) {17};
\node[vertex] (a4) at (18+3*72-4*360/5:4) {24};
\node[vertex] (a5) at (18+3*72-5*360/5:4) {19};

\draw[edge] (a1) to (1);
\draw[edge] (a2) to (2);
\draw[edge] (a3) to (3);
\draw[edge] (a4) to (4);
\draw[edge] (a5) to (5);
\end{tikzpicture}\quad
\begin{tikzpicture}[scale=0.63]
\node[vertex] (1) at (18+3*72-1*360/5:2) {5};
\node[vertex] (2) at (18+3*72-2*360/5:2) {13};
\node[vertex] (3) at (18+3*72-3*360/5:2) {9};
\node[vertex] (4) at (18+3*72-4*360/5:2) {11};
\node[vertex] (5) at (18+3*72-5*360/5:2) {10};

\foreach \from/\to in {1/2,2/3,3/4,4/5, 5/1}
\path[edge] (\from) edge (\to);

\node[vertex] (a1) at (18+3*72-1*360/5:4) {21};
\node[vertex] (a2) at (18+3*72-2*360/5:4) {26};
\node[vertex] (a3) at (18+3*72-3*360/5:4) {18};
\node[vertex] (a4) at (18+3*72-4*360/5:4) {22};
\node[vertex] (a5) at (18+3*72-5*360/5:4) {20};

\draw[edge] (a1) to (1);
\draw[edge] (a2) to (2);
\draw[edge] (a3) to (3);
\draw[edge] (a4) to (4);
\draw[edge] (a5) to (5);
\end{tikzpicture}\quad
}
    \caption{Sunlet subgraphs in case of $p=31$}
    \label{fig:graph_p31}
\end{figure}
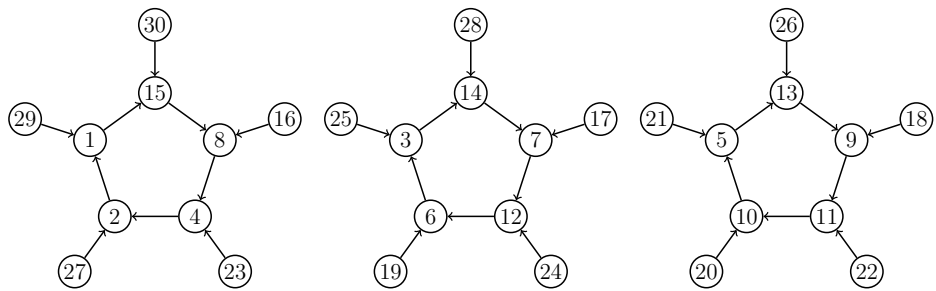

\noindent 2. Let $p=5419$. Now $L_{5419}=\ord_{5419}(4)=21$ is relatively a very small value for the length of the cycles, and primes having such a property are unavailable for cryptographic purposes. The number of connected subgraphs is $c_{5419}=129$.

\bigskip

{\bf Acknowledgments}
This paper was written when the first author visited the Institute of Mathematics, University of Sopron, and the Department of Mathematics and Informatics, J.~Selye University. He expresses his gratitude both departments for their hospitality.


\begin{thebibliography}{29}


\bibitem{DH}
{\sc Diffie, W., Hellman, M. E.}, {New directions in cryptography},
\textit{IEEE Trans. Info. Theory}, Vol. {22} (1976), 644--654.

\bibitem{ElGamal}
{\sc ElGamal, T.}, A public key cryptosystem and a signature scheme based
on discrete logarithm problem, \textit{IEEE Trans. Info. Theory}, Vol. {31} (1985), 469--472.

\bibitem{Fu} {\sc  Fu, C.~M., Jhuang, N.~H.,  Lin, Y.~L.,   Sung, H.~M.}, On the existence of $k$-sun systems, \textit{Discrete Mathematics}, Vol. 312 (2012), 1931--1939.

\bibitem{KhadirSzalay}
{\sc Khadir, O., Szalay, L.},  A special integer sequence strongly
connected to the discrete logarithm problem, \textit{J. Theor. Phys.
	Cryptogr.}, Vol. {2} (2013),  1--5.

\bibitem{PohligHellman}
{\sc Pohlig, S. C., Hellman, M. E.}, An improved algorithm for computing
logarithms over GF(p) and its cryptographic significance, \textit{IEEE Trans. Info. Theory}, Vol. {24} (1978), 106--110.

\bibitem{Pollard}  {\sc Pollard, A.}, Monte Carlo method for index computation (mod $p$), \textit{Math. Comp.}, Vol. {32} (1978), 918-924.

\bibitem{RSA}
{\sc Rivest, R. L., Shamir, A.,  Adleman, L.}, {A method for obtaining digital signatures and public-key cryptosystems}, \textit{Comm. ACM}, Vol. {21}  (1978), 120--126.

\bibitem{Shanks} {\sc Shanks, D.}, Class number, a theory of factorization and genera, in: \textit{Proc. Symp. Pure Math.}, AMS, Providence, R. I. Vol. {20} (1971), 415-440.

\end{thebibliography}
\end{document}